\theoremstyle{plain}
\newtheorem{theorem}{Theorem}
\numberwithin{equation}{section}
\begin{document}

\title {Implicational Completeness}

\date{}

\author[P.L. Robinson]{P.L. Robinson}

\address{Department of Mathematics \\ University of Florida \\ Gainesville FL 32611  USA }

\email[]{paulr@ufl.edu}

\subjclass{} \keywords{}

\begin{abstract}

For the implicational propositional calculus, we present a proof of completeness based on a variant of the Lindenbaum procedure. 

\end{abstract}

\maketitle

\medbreak

\section{Introduction} 

A standard formulation of the Implicational Propositional Calculus (IPC) has $\supset$ as its only connective, has modus ponens as its only inference rule, and has the following axiom schemes: 
$$A \supset (B \supset A)$$
$$[A \supset (B \supset C)] \supset [(A \supset B) \supset (A \supset C)]$$
$$[(A \supset B) \supset A] \supset A$$
the last of which is due to Peirce. A (Boolean) {\it valuation} is a map $v$ from the set ${\it wf}$ of all well-formed formulas to the set $\{ 0, 1 \}$ such that $v(A \supset B) = 0$ precisely when $v(A) = 1$ and $v(B) = 0$; a {\it tautology} is a well-formed formula that takes the value $1$ in all such valuations. 

\medbreak 

 The {\it Completeness Theorem} for IPC asserts that every tautology is a theorem: if a well-formed formula has value $1$ in each valuation then there is a proof of it using modus ponens and the three axiom schemes listed above. One proof of completeness for IPC is indicated in Exercises 6.3, 6.4, 6.5 of Robbin [2]; that proof adapts the Kalm\`ar approach for the classical propositional calculus. Our purpose in this brief paper is to present a proof of completeness for IPC that adapts the Lindenbaum approach for the classical propositional calculus. 

\medbreak 

\section{Theorem and Proof}

\medbreak 

We begin with some simple observations regarding IPC. To say that $A \in {\it wf}$ may be deduced from $\Gamma \subseteq {\it wf}$ we may write $\Gamma \vdash A$ as usual; in particular (taking $\Gamma$ to be empty) $\vdash A$ asserts that $A$ is a theorem. Modus ponens and the first two axiom schemes together ensure that if $A \in {\it wf}$ is any well-formed formula then $A \supset A$ is a theorem. Modus ponens and the first two axiom schemes further ensure that the {\it Deduction Theorem} (DT) holds: if $\Gamma \cup \{ A \} \vdash B$ then $\Gamma \vdash A \supset B$; in particular, if $A \vdash B$ then $\vdash A \supset B$. 
\medbreak 

The lack of negation in IPC is in part repaired by an elegant device. Fix an arbitrary well-formed formula $Q \in {\it wf}$. When $A \in {\it wf}$ write $Q A := Q(A) := A \supset Q$. 

\medbreak 

\begin{theorem} \label{Robbin}
Fix any well-formed formula $Q \in {\it wf}$. If $A, B, C \in {\it wf}$ then each of the following well-formed formulas is a theorem of IPC: \par 

{\rm (1)} $(A \supset B) \supset [(B \supset C) \supset (A \supset C)]$ \par 
{\rm (2)} $(A \supset B) \supset (QB \supset QA)$ \par 
{\rm (3)} $A \supset QQA$ \par
{\rm (4)} $QQQA \supset QA$ \par 
{\rm (5)} $QQB \supset QQ(A \supset B)$ \par 
{\rm (6)} $QQA \supset [QB \supset Q(A \supset B)]$ \par 
{\rm (7)} $QA \supset QQ(A \supset B)$ \par 
{\rm (8)} $(QA \supset B) \supset [(QQA \supset B) \supset QQB].$ 

\end{theorem}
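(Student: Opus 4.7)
The plan is to prove the eight items in order, each as a Hilbert-style derivation closed off by applications of the Deduction Theorem (DT). Parts (1)--(6) and (8) are intuitionistically valid and invoke only A1, A2, modus ponens, and DT; only part (7) requires the Peirce axiom.

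For (1), I would derive $C$ from the hypothesis set $\{A \supset B, B \supset C, A\}$ by two uses of modus ponens and then apply DT three times; (2) is the special case $C := Q$ of (1). For (3), the one-line derivation $\{A, A \supset Q\} \vdash Q$ by modus ponens followed by two DT's yields $\vdash A \supset QQA$. Part (4) follows from (3) via (2): instantiating (2) at the pair $(A, QQA)$ and using modus ponens against (3) gives $QQQA \supset QA$. Part (5) is a double application of (2) to axiom A1 in the form $B \supset (A \supset B)$: the first application produces $Q(A \supset B) \supset QB$, and the second produces $QQB \supset QQ(A \supset B)$. For (6), I would derive $Q$ from $\{QQA, QB, A \supset B\}$ by hypothetical syllogism on $A \supset B$ and $QB = B \supset Q$ (which produces $QA$) followed by modus ponens against $QQA$, then invoke DT three times.

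The only substantive step is (7), and it is the main obstacle: one must locate the right Peirce instance. Two DT's reduce the goal to $QA, (A \supset B) \supset Q \vdash Q$. My approach is to invoke the Peirce instance $((Q \supset B) \supset Q) \supset Q$ (Peirce's axiom with $A := Q$ and $B := B$), so that it suffices to produce $(Q \supset B) \supset Q$ from these two hypotheses. Assuming $Q \supset B$ provisionally and applying hypothetical syllogism to $QA = A \supset Q$ and $Q \supset B$ yields $A \supset B$; modus ponens against $(A \supset B) \supset Q$ then yields $Q$. One DT discharges the assumption $Q \supset B$, Peirce plus modus ponens delivers $Q$ outright, and the two outstanding DT's complete the proof of (7). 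The crux is the idea of closing under a Peirce instance with $Q$ as the conclusion, parametrized by $B$; once that choice is seen, the derivation is immediate.

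For (8), I would assume $QA \supset B$, $QQA \supset B$, and $QB$, and derive $Q$ as follows: hypothetical syllogism applied to $QA \supset B$ and $QB = B \supset Q$ yields $QA \supset Q$, which is $QQA$; modus ponens against the second hypothesis yields $B$; modus ponens against $QB$ yields $Q$. Three applications of DT deliver the theorem. Altogether, (1)--(6) and (8) are routine DT--bookkeeping once hypothetical syllogism is in hand from (1), while (7) isolates the single nontrivial appeal to Peirce.
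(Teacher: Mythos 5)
Your derivations are all correct. The paper itself does not prove this theorem: it simply cites Exercise 6.3 of Robbin, adding only the remark that part (7) requires the Peirce axiom scheme while the other parts need just the first two schemes. Your proposal supplies exactly the details that the paper outsources, and it does so in a way consistent with that remark: parts (1)--(6) and (8) use only modus ponens and the Deduction Theorem (whose proof rests on the first two schemes alone, plus the single explicit appeal to scheme $A \supset (B \supset A)$ in part (5)), while part (7) isolates the one genuine use of Peirce. The key step there --- reducing the goal to $\{QA,\ (A \supset B) \supset Q\} \vdash Q$, provisionally assuming $Q \supset B$ to manufacture $A \supset B$ by hypothetical syllogism, discharging to get $(Q \supset B) \supset Q$, and closing with the Peirce instance $[(Q \supset B) \supset Q] \supset Q$ --- is exactly the right move and checks out. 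The remaining parts ((2) as the instance $C := Q$ of (1), (4) from (2) and (3), (5) by applying (2) twice to $B \supset (A \supset B)$, and the hypothesis-juggling in (6) and (8)) are all verified. Nothing is missing.
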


\begin{proof} 
This is Exercise 6.3 in Chapter 1 of Robbin [2]. As noted by Robbin, part (7) requires the Peirce axiom scheme; the other parts need only the first two axiom schemes. 
\end{proof} 

\medbreak 

 The classical propositional calculus presented by Church [1] and followed by Robbin [2] incorporates a propositional symbol $\mathfrak{f}$ (falsity) having value $0$ under each valuation; it takes $\mathfrak{f} A = A \supset \mathfrak{f}$ for the negation $\thicksim A$ of $A$; and it replaces the Peirce axiom scheme by the `double negation' axiom scheme  $\thicksim \thicksim A \supset A$. The symbol $\mathfrak{f}$ has no place in the present paper; however, significant aspects of its function will be served by a non-theorem $Q$.

\medbreak 

From this point on, we shall consider extensions of IPC obtained by enlarging the set of theorems, so we shall modify our notation accordingly. Let us write $L$ for the system IPC as formulated above and write $\mathbb{T} (L) = \{ A \in {\it wf} : \; \; \vdash A \}$  for its set of theorems. An extension $M$ of $L$ is produced by adding an axiom (or axioms); thus $\mathbb{T} (L) \subseteq \mathbb{T} (M)$ and the Deduction Theorem continues to hold for $M$. To indicate that a well-formed formula $A$ is a theorem of $M$ we prefer to write $A \in\mathbb{T} (M)$ rather than the customary $\underset{M}{\vdash} A$. 

\medbreak 

\begin{theorem} \label{con}
Let $Q \in {\it wf}$ and let $M$ be an extension of $L$. The following are equivalent: \par 
{\rm (1)} $Q \in \mathbb{T} (M);$ \par 
{\rm (2)}  some $A \in {\it wf}$ has $A \in \mathbb{T} (M)$ and $QA \in \mathbb{T} (M);$ \par 
{\rm (3)} some $A \in {\it wf}$ has $QQA \in \mathbb{T} (M)$ and $QA \in \mathbb{T} (M).$
\end{theorem}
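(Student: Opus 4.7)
The plan is to establish the cycle $(1) \Rightarrow (2) \Rightarrow (3) \Rightarrow (1)$, each link being a short direct argument with modus ponens and at most one invocation of Theorem~\ref{Robbin}.

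For $(1) \Rightarrow (2)$, I would take $A := Q$. Then $QA = Q \supset Q$, which is a theorem of $L$ (the paper notes that $A \supset A$ is derivable from modus ponens and the first two axiom schemes), so $QA \in \mathbb{T}(L) \subseteq \mathbb{T}(M)$; and $A = Q \in \mathbb{T}(M)$ by hypothesis. Alternatively one could pick any $L$-theorem for $A$ and use axiom scheme 1 to produce $QA$ from $Q$ by modus ponens, but choosing $A = Q$ is the cleanest.

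For $(2) \Rightarrow (3)$, starting from $A, QA \in \mathbb{T}(M)$, I would apply Theorem~\ref{Robbin}(3): $A \supset QQA$ is a theorem of $L$, hence of $M$, so modus ponens with $A$ delivers $QQA \in \mathbb{T}(M)$. Thus the same $A$ witnesses (3).

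For $(3) \Rightarrow (1)$, given $QA, QQA \in \mathbb{T}(M)$, I just unfold the abbreviation: $QQA$ is literally $QA \supset Q$, so one application of modus ponens with $QA$ yields $Q \in \mathbb{T}(M)$.

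I do not anticipate a real obstacle; each implication is one or two lines. The only point requiring any care is to remember the abbreviation $QX = X \supset Q$, which makes (3)$\Rightarrow$(1) immediate once unfolded, and to cite Theorem~\ref{Robbin}(3) for the step $(2) \Rightarrow (3)$. No use of the Peirce axiom (and hence of Theorem~\ref{Robbin}(7)) is needed anywhere in this argument.
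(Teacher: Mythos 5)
Your proposal is correct and is essentially identical to the paper's own proof: the same cycle of three implications, with $(1)\Rightarrow(2)$ witnessed by $A=Q$ via the theorem $Q\supset Q$, $(2)\Rightarrow(3)$ by Theorem~\ref{Robbin}(3) and modus ponens, and $(3)\Rightarrow(1)$ by unfolding $QQA = QA \supset Q$ and applying modus ponens. Your closing remark that the Peirce axiom is not needed here is accurate and consistent with the paper's note that only part (7) of Theorem~\ref{Robbin} requires it.
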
 

\begin{proof} 
$(2) \Rightarrow (3)$ Follows from Theorem \ref{Robbin} part (3) by modus ponens. \\
$(3) \Rightarrow (1)$ Follows by modus ponens from $\mathbb{T} (M) \ni QA$ and $\mathbb{T} (M) \ni QQA \; (= QA \supset Q)$. \\
$(1) \Rightarrow (2)$ Simply let $A = Q$ and recall that $QQ \; (= Q \supset Q) \in  \mathbb{T} (L) \subseteq \mathbb{T} (M).$
\end{proof} 

\medbreak 

We say that $M$ is $Q$-{\it inconsistent} precisely when it satisfies one (hence each) of the equivalent conditions in this theorem; we say that $M$ is $Q$-{\it consistent} otherwise. 

\medbreak 

Henceforth, we shall let $Q \in {\it wf}$ be a well-formed formula that is {\bf not} a theorem of $L$; thus, $L$ is $Q$-consistent. 

\medbreak 

\begin{theorem} \label{step}
Let $M$ be a $Q$-consistent extension of $L$ and $A \in {\it wf}$  a well-formed formula. If $QQA$ is {\it not} a theorem of $M$, then the extension $N$ of $M$ obtained by adding $QA$ as an axiom is $Q$-consistent.  
\end{theorem}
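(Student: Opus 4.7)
The plan is to proceed by contraposition: I will assume that $N$ is $Q$-inconsistent and deduce that $QQA$ must already be a theorem of $M$, contradicting the hypothesis. By Theorem \ref{con}(1), $Q$-inconsistency of $N$ means simply $Q \in \mathbb{T}(N)$, so this reformulation collapses the three-way equivalence of Theorem \ref{con} down to its cleanest case.

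Next, I would exploit the fact that $N$ is obtained from $M$ by adjoining the single formula $QA$ as a new axiom. The standard identification between axioms and hypotheses then tells us that $Q \in \mathbb{T}(N)$ is the same as $\{QA\} \vdash_M Q$ (any proof of $Q$ in $N$ is just a proof of $Q$ in $M$ in which the one additional axiom $QA$ is used as a hypothesis). The Deduction Theorem, which the excerpt records as continuing to hold for every extension of $L$, converts this into $\vdash_M QA \supset Q$. But by the very definition of the $Q$-prefix, $QA \supset Q = Q(QA) = QQA$; hence $QQA \in \mathbb{T}(M)$, contradicting the hypothesis that $QQA$ is not a theorem of $M$.

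I do not anticipate any serious obstacle. The whole argument is essentially a one-line application of the Deduction Theorem, once $Q$-consistency has been unpacked via Theorem \ref{con}. The only pieces of genuine bookkeeping are the transparent identification of the formula $QA \supset Q$ with $QQA$, and the standard principle that adding a formula as an axiom is interchangeable with using it as a hypothesis in a derivation. Both are routine, but both need to be named to make the contradiction explicit; no appeal to any part of Theorem \ref{Robbin} is required.
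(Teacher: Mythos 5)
Your proposal is correct and coincides with the paper's own argument: both prove the contrapositive by reading a proof of $Q$ in $N$ as a deduction of $Q$ from the hypothesis $QA$ within $M$, then apply the Deduction Theorem to obtain $QA \supset Q = QQA \in \mathbb{T}(M)$. No changes needed.
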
 

\begin{proof} 
To prove the contrapositive, assume that $Q \in \mathbb{T} (N)$: a deduction of $Q$ within the system $N$ is a deduction of $Q$ from $QA$ within the system $M$; by the Deduction Theorem, it follows that $QQA = (QA \supset Q) \in \mathbb{T} (M)$. 
\end{proof}

\medbreak 

We say that $M$ is $Q$-{\it complete} precisely when each $A \in {\it wf}$ satisfies either $QQA \in \mathbb{T} (M)$ or $QA \in \mathbb{T} (M)$; that is, either $QQA$ or $QA$ is a theorem of $M$. 

\medbreak 

\begin{theorem} \label{completion}
Each $Q$-consistent extension $M$ of $L$ has a $Q$-complete $Q$-consistent extension. 
\end{theorem}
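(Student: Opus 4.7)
The plan is to mimic the classical Lindenbaum construction, using Theorem~\ref{step} as the one-step extension lemma. Assume the set ${\it wf}$ is countable (standard under a countable supply of propositional variables) and enumerate it as $A_1, A_2, A_3, \ldots$. I then build an increasing chain
\[ M = M_0 \subseteq M_1 \subseteq M_2 \subseteq \cdots \]
of $Q$-consistent extensions of $L$ by recursion on $n$: having defined $M_{n-1}$, if $QQA_n \in \mathbb{T}(M_{n-1})$ set $M_n = M_{n-1}$; otherwise, let $M_n$ be the extension of $M_{n-1}$ obtained by adjoining $QA_n$ as a new axiom. Theorem~\ref{step} guarantees that $M_n$ is $Q$-consistent in either case.

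Next I pass to the limit. Let $M^\ast$ be the extension of $L$ whose axioms are the union of the axioms of all the $M_n$. Because every proof has finite length and therefore invokes only finitely many axioms, one has $\mathbb{T}(M^\ast) = \bigcup_{n \geq 0} \mathbb{T}(M_n)$, and the Deduction Theorem remains available for $M^\ast$ as it did for each $M_n$.

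From this it is routine to verify the two desired properties. For $Q$-consistency: if $Q \in \mathbb{T}(M^\ast)$ then $Q \in \mathbb{T}(M_n)$ for some $n$, contradicting the $Q$-consistency of $M_n$ established at the preceding stage. For $Q$-completeness: given any $A \in {\it wf}$, write $A = A_n$ for the appropriate $n$; at stage $n$ either $QQA_n \in \mathbb{T}(M_{n-1}) \subseteq \mathbb{T}(M^\ast)$ was already the case, or else $QA_n$ was explicitly adjoined and so $QA_n \in \mathbb{T}(M_n) \subseteq \mathbb{T}(M^\ast)$.

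The only real obstacle is conceptual rather than technical: one must commit to the right dichotomy at each stage. A naive attempt that asks at stage $n$ whether adjoining $QA_n$ preserves $Q$-consistency, adjoining $QA_n$ if so and otherwise adjoining $QQA_n$, would require a separate argument. Theorem~\ref{step} is precisely the tool that lets us side-step this: the failure of $QQA_n$ to be a theorem of $M_{n-1}$ is exactly what licenses the adjunction of $QA_n$ while preserving $Q$-consistency, so the construction proceeds with no case analysis beyond asking whether $QQA_n$ is already derivable. (If countability of ${\it wf}$ is not assumed, the same argument goes through via transfinite recursion on an ordinal indexing of ${\it wf}$, or via a direct appeal to Zorn's Lemma applied to the poset of $Q$-consistent extensions of $M$ ordered by inclusion of theorem sets.)
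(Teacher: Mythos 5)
Your proposal is correct and is essentially the paper's own argument: the same Lindenbaum-style recursion driven by Theorem~\ref{step} (testing whether $QQA_n$ is already a theorem and adjoining $QA_n$ if not), the same passage to the union, and the same finiteness-of-proofs argument for $Q$-consistency and stagewise argument for $Q$-completeness. The closing remarks on the uncountable case are a harmless extra not present in the paper.
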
 

\begin{proof} 
List all the well-formed formulas: say ${\it wf} = \{ A_n : n \geqslant 0 \} = \{ A_0, A_1, \dots \}.$ \par 
Put $N_0 = M$. If $QQA_0 \in \mathbb{T} (N_0)$ then let $N_1 = N_0$; if $QQA_0 \notin \mathbb{T} (N_0)$ then let $N_1$ be $N_0$ with $Q A_0$ as an extra axiom. Repeat inductively: if $QQA_n \in \mathbb{T} (N_n)$ then let $N_{n + 1} = N_n$; if $QQA_n \notin \mathbb{T} (N_n)$ then let $N_{n + 1}$ be $N_n$ with $Q A_n$ as an extra axiom. Finally, let $N$ be the extension of $M$ produced by adding as axioms all those {\it wf}s introduced at each stage of this inductive process. \par 
Claim: $N$ is $Q$-consistent. [Any proof of $Q$ in $N$ would involve only finitely many axioms and would therefore be a proof of $Q$ in $N_n$ for some $n \geqslant 0$; but Theorem \ref{step} guarantees inductively that the extension $N_n$ is $Q$-consistent for each $n \geqslant 0$.] \par 
Claim: $N$ is $Q$-complete. [Take any {\it wf}: say $A_n$. If $QQA_n \in \mathbb{T} (N_n)$ then $QQA_n \in \mathbb{T} (N)$; if $QQA_n \notin \mathbb{T} (N_n)$ then $QA_n \in \mathbb{T} (N_{n + 1})$ so that $QA_n \in \mathbb{T} (N)$. Thus, if $A \in {\it wf}$ is arbitrary then either $QQA$ or $QA$ is a theorem of $N$ as required.] 
\end{proof} 

\medbreak 

Now, let $M$ be a $Q$-consistent extension of $L$ and let $N$ be a $Q$-complete $Q$-consistent extension of $M$. Let $A$ be a well-formed formula: when $QQA$ is a theorem of $N$ we put $v_N(A) = 1$; when $QA$ is a theorem of $N$ we put $v_N(A) = 0$. As $N$ is both $Q$-complete and $Q$-consistent, this defines a function $v_N : {\it wf} \rightarrow \{ 0, 1 \}.$

\medbreak 

{\bf Claim:} $v = v_N$ is a valuation: that is, $v(A \supset B) = 0$ precisely when $v(A) = 1$ and $v(B) = 0$. 

\medbreak 

{\it Proof}: Suppose that $v(A) = 0$: that is, suppose $QA \in \mathbb{T} (N)$; Theorem \ref{Robbin} part (7) tells us that 
$$QA \supset QQ(A \supset B) \in \mathbb{T} (L) \subseteq \mathbb{T} (N)$$
whence modus ponens places $QQ(A \supset B)$ in $\mathbb{T} (N)$ and $v(A \supset B) = 1$. Suppose $v(B) = 1$: that is, suppose $QQB \in \mathbb{T} (N)$; Theorem \ref{Robbin} part (5) tells us that 
$$QQB \supset QQ(A \supset B) \in \mathbb{T} (L) \subseteq \mathbb{T} (N)$$
whence modus ponens places $QQ(A \supset B)$ in $\mathbb{T} (N)$ and $v(A \supset B) = 1$. Thus 
$$(v(A) = 0) \vee (v(B) = 1) \Rightarrow v(A \supset B) = 1$$
and so 
$$v(A \supset B) = 0 \Rightarrow (v(A) = 1) \wedge (v(B) = 0).$$ 
Conversely, let $v(A) = 1$ and $v(B) = 0$: thus, $QQA$ and $QB$ are theorems of $N$; part (6) of Theorem \ref{Robbin} tells us that 
$$QQA \supset [QB \supset Q(A \supset B)] \in \mathbb{T} (L) \subseteq \mathbb{T} (N)$$
whence two applications of modus ponens yield $Q(A \supset B) \in \mathbb{T} (N)$ and so $v(A \supset B) = 0$. 
\begin{flushright} 
$\Box$
\end{flushright} 

We are now able to prove the completeness of IPC. 

\begin{theorem} 
The Implicational Propositional Calculus is complete. 
\end{theorem}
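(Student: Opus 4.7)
The plan is to establish completeness via its contrapositive: given a well-formed formula $A$ that is \emph{not} a theorem of $L$, exhibit a valuation in which $A$ takes the value $0$. Once this is shown, any $A$ taking the value $1$ in every valuation must automatically be a theorem, which is precisely the Completeness Theorem.

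First, fix an $A \in {\it wf}$ with $A \notin \mathbb{T}(L)$, and specialize the parameter $Q$ from Section 1 by setting $Q := A$. Then by hypothesis $L$ itself is $Q$-consistent, so Theorem \ref{completion} produces a $Q$-complete, $Q$-consistent extension $N$ of $L$. The Claim immediately preceding this theorem then tells us that the associated map $v_N : {\it wf} \to \{0,1\}$ is a valuation. All that remains is to compute $v_N(A)$.

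To do so, I would invoke the observation from the opening remarks of Section 1 that $Q \supset Q$ is a theorem of $L$; equivalently, $QQ \in \mathbb{T}(L) \subseteq \mathbb{T}(N)$. By the definition of $v_N$, with the generic argument replaced by $Q$ itself, having $QQ \in \mathbb{T}(N)$ forces $v_N(Q) = 0$. This assignment is unambiguous, for if $QQQ$ also lay in $\mathbb{T}(N)$ then Theorem \ref{con} part (3), applied with $A = Q$, would place $Q \in \mathbb{T}(N)$, contradicting the $Q$-consistency of $N$. Thus $v_N$ witnesses that $A = Q$ is not a tautology.

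No serious obstacle arises at this point: essentially all the work has been done in Theorems \ref{Robbin}--\ref{completion} and in the Claim showing that $v_N$ is a valuation. The only point demanding care is the reparameterization $Q := A$, which converts the free well-formed formula $Q$ of the preceding development into the specific non-theorem to be separated from the tautologies.
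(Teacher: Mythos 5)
Your proposal is correct and follows essentially the same route as the paper: specialize $Q$ to the given non-theorem, pass to a $Q$-complete $Q$-consistent extension $N$ via Theorem \ref{completion}, and observe that $QQ = Q \supset Q \in \mathbb{T}(N)$ forces $v_N(Q) = 0$. Your extra check that the assignment is unambiguous merely re-verifies what the $Q$-consistency of $N$ already guarantees in the definition of $v_N$, so nothing is missing.
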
 

\begin{proof} 
Suppose that $Q$ is not a theorem of $L$; thus, $L$ is $Q$-consistent. Theorem \ref{completion} fashions a $Q$-complete $Q$-consistent extension $N$ of $L$ by means of which we define the valuation $v_N$ as above. Before stating Theorem \ref{Robbin} we noted that $Q \supset Q$ is a theorem of $L$; thus $QQ \; (= Q \supset Q)$ is a theorem of $N$ and so $v_N (Q) = 0$. We have found a valuation under which $Q$ does not take the value $1$; $Q$ is not a tautology. 
\end{proof}

\bigbreak

\begin{center} 
{\small R}{\footnotesize EFERENCES}
\end{center} 
\medbreak 

[1] Alonzo Church, {\it Introduction to Mathematical Logic}, Princeton University Press (1956). 

\medbreak 

[2] Joel W. Robbin, {\it Mathematical Logic - A First Course}, W.A. Benjamin (1969); Dover Publications (2006). 

\medbreak

\end{document}